\newtheorem{theorem}{Theorem}[section]
\newtheorem{corollary}{Corollary}
\theoremstyle{definition}
\newtheorem{remark}{Remark}
\def\cR{\mathbb{R}}
\def\cR{\mathbb{R}}
\def\dis{\displaystyle}
\def\e{\mbox{e}}
\def\P{{\mathcal P}}
\title[Second--Order Herglotz Problem on Spheres] 
      {Variational and Optimal Control Approaches for the Second--Order Herglotz Problem on Spheres$^*$}
\author[L. Machado,  L. Abrunheiro and N. Martins]{}
\subjclass{Primary: 49K15, 49S05, 53B21; Secondary: 34H05.}
 \keywords{Variational problems of Herglotz type, higher--order variational calculus, higher--order optimal control problems, Riemannian cubic polynomials, Euclidean sphere.}
 \email{abrunheiroligia@ua.pt}
 \email{lmiguel@utad.pt}
 \email{natalia@ua.pt}
\thanks{$^{\dagger}$Corresponding author: lmiguel@utad.pt}
\thanks{$^{*}$This is a preprint of a paper whose final and definite form will appear in the \textit{Journal of Optimization Theory and Applications}. Paper submitted 25-Nov-2017; accepted for publication 19-Oct-2018.}
\begin{document}
\maketitle


\centerline{\scshape Lu\'{i}s Machado$^\dagger$}
\medskip
{\footnotesize
 \centerline{ISR $-$ University of Coimbra, Coimbra, Portugal \&}
 \centerline{Department of Mathematics, University of Tr\'as-os-Montes e Alto Douro (UTAD),}
   \centerline{Vila Real, Portugal}
}

\medskip
\centerline{\scshape L\'{i}gia Abrunheiro}
\medskip
{\footnotesize
 \centerline{Center for Research and Development in Mathematics and Applications (CIDMA) \& }
  \centerline{Higher Institute of Accounting and Administration,}
   \centerline{University of Aveiro, Portugal}
} 

\medskip

\centerline{\scshape Nat\'{a}lia Martins}
\medskip
{\footnotesize
 \centerline{Center for Research and Development in Mathematics and Applications (CIDMA) \&}
   \centerline{Department of Mathematics, University of Aveiro, Portugal}
}

\bigskip


\begin{abstract}
The present paper extends the classical second--order variational problem of Herglotz type to the more general context of the Euclidean sphere $S^n$ following variational and optimal control approaches. The relation between the Hamiltonian equations and the generalized Euler-Lagrange equations is established. This problem covers some classical variational problems posed on the Riemannian manifold $S^n$ such as the problem of finding cubic polynomials on $S^n$. It also finds applicability on the dynamics of the simple pendulum in a resistive medium.
\end{abstract}

\section{Introduction}

It is well known that for a very broad class of nonconservative field phenomena, we are not able to put them into the classical framework of Hamilton's variational principle. One may think, for instance, in the partial differential equation that describes the conduction of heat in solids \cite{VuJo}. To fill this gap, in 1930, while working on contact transformations and its connections to Hamiltonian systems and Poisson brackets, Gustav Herglotz \cite{Herglotz1930} generalized the classical variational problem. This problem consists in finding a curve in the configuration space and a scalar function sought by a differential equation whose final value is extremized. The Herglotz problem differs from the classical calculus of variations problem since the differential equation depends, not only on time, the curve and their derivatives, but also on the scalar function itself.

Besides its pioneer importance in thermodynamics via contact transformations, the generalized variational principle of Herglotz had also a profound impact in applications from physics, engineering and other mathematically related sciences. We reinforce the important fact that it provides a variational description of nonconservative and dissipative processes even when the Lagrangian is autonomous \cite{Georgieva2002}. 

The Herglotz problem has been a source of inspiration for several authors after the two publications \cite{MR1391230,Guenther1996} in the late nineties till nowadays. The most well known results from the classical calculus of variations have been generalized for the variational problem of Herglotz. We highlight the two Noether theorems for the first--order problem \cite{Georgieva2002,Georgieva2010,Georgieva2005,Georgieva2003} that were generalized for the Herglotz problem with time delay \cite{Simao+Delfim+Natalia2015b} and for the higher--order Herglotz problem, with and without, time delay \cite{Simao+Delfim+Natalia2015a,Simao+Delfim+Natalia2015c,Simao+Delfim+Natalia2016,Simao+Delfim+Natalia2018}.  All the aforementioned results have been derived following Lagrangian or Hamiltonian formalisms, but recently, Noether's Theorem and its inverse have been also extended for variational problems of Herglotz type for Birkhoffian systems  \cite{Zhang,Tian}.  The theory of Birkhoffian mechanics is a natural generalization of the Hamiltonian mechanics and is based on the Pfaff-Birkhoff principle and Birkhoff's equations. Thus, Birkhoffian mechanics can be applied not only to Hamiltonian, Lagrangian, or Newtonian mechanics, but also to quantum mechanics, statistical mechanics, holonomic and nonholonomic mechanics, atomic and molecular physics among others (see \cite{Birk,San} and references therein for more details).

We also mention the fractional variational Herglotz problem and its corresponding Noether's Theorem discussed in \cite{Alm:2014}.  Fractional calculus deals with derivatives and integrals of arbitrary order, thus extending the capabilities of classical calculus, but also introducing novelties in theoretical and applied research. In particular, fractional variational calculus has gained considerable popularity in the last decades due to its important applications in physics, classical and quantum mechanics, electrodynamics, field theory, cosmology and nanoscience \cite{Riewe1,Riewe2}. Nevertheless, recent investigations not only in engineering, but also in other fields, have shown that the dynamics of many systems are described more accurately using fractional differential equations. The increase demanding for efficiency, accuracy and high precision of systems required the development of the new field of fractional optimal control theory \cite{Agrawal1,Agrawal2,Jarad:10,Jarad:12,Bahaa}. 

The generalization of the Herglotz problem in the more general context of Riemannian manifolds is a challenging topic of research due to the complexity of dealing with covariant derivatives. The first attempt in this direction was the study of the first--order Herglotz problem on the Euclidean unit sphere $S^n$ \cite{2016AbrMacMar}.

Motivated by the significance of Herglotz variational problem and the important role played by Riemannian manifolds in many engineering and physics applications, in this paper we extend the second--order Herglotz problem to $S^n$ following two different formulations. 
The interest of studying this type of second--order variational problems relies on the fact that most part of mechanical systems are typically represented by second--order Newtonian systems \cite{Arnold}.  In the first formulation, a variational problem of Herglotz is posed by considering the Riemannian manifold $S^n$ as the configuration space and where the differential equation that characterizes the Herglotz problem depends on the second covariant derivative. The second formulation is determined by an optimal control problem with a system of controlled state space equations equivalent to the differential equation from the variational approach. 

There are several works establishing the equivalence between optimal control problems and the classical calculus of variations problem with holonomic or nonholonomic constraints (see, for instance, \cite{Cro:94,Cro:95,Cro:96}).  However, the variational problem of Herglotz does not fit into the classical framework and therefore those results cannot be applied directly to this specific problem. We believe that the trick to establish this equivalence for the Herglotz problem passes through the generalization of the well known Legendre-Ostrogradsky condition. However, this is still an open question.

The structure of the paper is as follows. In Section \ref{secsph}, we recall the most important results about the geometry of the Euclidean sphere $S^n$. Our main contributions appear in Sections \ref{2stOrderVP} and \ref{OControlP}. In Section \ref{2stOrderVP},  the second--order variational problem of Herglotz is formulated on $S^n$ and the corresponding generalized  \mbox{Euler--Lagrange} equations are derived and written in terms of the higher--order covariant derivatives.  In Section \ref{OControlP}, the problem is formulated using the framework of optimal control theory and the set of Hamiltonian equations is derived. The Euler--Lagrange equations are therefore obtained from the Hamiltonian equations. Finally, we illustrate our results with some examples. In particular, we fit the Riemannian cubic polynomials in this new context and show how the dynamical equation for the pendulum in a resistive medium can be obtained from our approach.


\section{Preliminaries on the Geometry of $S^n$} \label{secsph}

Let $\langle \cdot,\cdot \rangle$ denote the standard inner product in the Euclidean space $\cR^{n+1}$. The unit $n-$sphere $S^n:=\bigl\{p\in\cR^{n+1}\::\:\langle p, p \rangle =1\bigr\}$ is an $n-$dimensional Riemannian manifold embedding in $\cR^{n+1}$. The tangent space of $S^n$ at a point $p\in S^n$ and its orthogonal complement are given, respectively, by
$$
T_pS^n=\bigl\{v\in \cR^{n+1}\::\:\langle v, p\rangle =0\bigr\}~~\mbox{and}
~~T_p^\perp S^n=\bigl\{\alpha\, p\in\cR^{n+1}\::\:\alpha \in \cR\bigr\}.
$$

Clearly, any vector $u\in\cR^{n+1}$ can be uniquely written as
$$
u=u-\langle u,p\rangle p +\langle u,p\rangle p,
$$
where $u-\langle u,p\rangle\, p\in T_p S^n$ and $\langle u,p\rangle\, p\in T_p^\perp S^n$.

The tangent bundle of $S^n$ is the disjoint union of all tangent spaces of $S^n$ 
$$
TS^n=\bigcup_{p\in S^n}T_pS^n,
$$
and is a differentiable manifold of dimension $2n$. Each element $v\in T_pS^n$ is naturally identified with a pair $(p,v)$, for each $p\in S^n$. Thus, an element of $TS^n$ can be seen as a pair $(p,v)$, with $p\in S^n$ and $v\in T_pS^n$.  The cotangent bundle $T^*S^n$ is the dual bundle of $TS^n$, so each element of $T^*S^n$ can be seen as a pair $(p,\alpha)$, with $p\in S^n$ and $\alpha  \in T^*_pS^n$, $\alpha:  T_pS^n\to \mathbb{R}$. In a similar way, we define the $4n$-dimensional manifold $TTS^n$, $T TS^n =  \cup_{(p,v)}T_{(p,v)}TS^n$. 

Recall that the tangent bundle of a manifold is defined as the set of all equivalence classes of curves on the manifold that agree up to their derivative (roughly speaking). Extending this concept, we reach the geometry for higher--order tangent bundles. In the scope of this paper, we are interested in the second--order tangent bundle, $T^2S^n$, which is a $3n$-dimensional manifold that can be injected in $TTS^n$ and identified with 
$$
T^2S^n=\{(p,v, w,u)\in TTS^n: w=v \}.
$$
Moreover, each element of $T^2S^n$ can be identified with a triplet $(p,v,u)$, with $p\in S^n$ and $v,u\in T_pS^n$.

The covariant derivative of a smooth vector field $Y$ along a curve $x$ in $S^n$ is obtained by projecting, at time $t$, the usual derivative of $Y$, $\dot{Y}$, orthogonally onto $T_{x(t)}S^n$. Hence,
\begin{equation} \label{DCovaESFERA}
\frac{DY}{dt}(t)=\dot{Y}(t)-\bigl\langle \dot{Y}(t), x(t)\bigr\rangle\, x(t),
\end{equation}
and, more generally, 
\begin{equation*} \label{DCovaESFERA}
\frac{D^kY}{dt^k}=\dfrac{D}{dt}\Bigl(\dfrac{D^{k-1}Y}{dt^{k-1}}\Bigr), ~k\ge 2.
\end{equation*}

Since in our approach we are going to deal with second--order covariant derivatives, it can be easily seen from the above definitions that
\begin{equation*} \label{DCovaESFERA}
\frac{D^2Y}{dt^2}=\ddot{Y}-\bigl\langle \ddot{Y},x\bigr\rangle x -\bigl\langle \dot{Y},x\bigr\rangle \dot{x} .
\end{equation*}

For the particular case when $Y$ is the velocity vector field $\dot{x}$, its covariant derivative,  $\frac{D\dot{x}}{dt}$, called covariant acceleration and denoted by $\frac{D^2 x}{dt^2}$,  is simply given by
\begin{equation}  \label{cov_sph}
\frac{D^2x}{dt^2}= \ddot{x}-\bigl\langle\ddot{x},x\bigr\rangle x.
\end{equation}

See \cite{cmA,Jost} for further details about classical differential geometry and \cite{LeonRodrigues:1985} for details on the geometric properties of the higher--order tangent bundles.
\section{The Second--Order Herglotz Variational Problem on $S^n$} \label{2stOrderVP}

In this section, we present the second--order Herglotz problem on the Euclidean sphere $S^n$ and derive necessary optimality conditions for the existence of extremals; the so-called generalized Euler--Lagrange equations. This problem has a more complex structure than the first--order Herglotz problem studied in  \cite{2016AbrMacMar}, since it involves the covariant acceleration (\ref{cov_sph}), which is a nonlinear function. The problem can be formulated as follows.

\begin{quote}
{\textbf{Problem ($\P_V$)}:} 
Determine the trajectories $x\in C^4([0,T],S^n)$ \linebreak and $z\in C^1([0,T],\cR)$ that minimize the final value of the function $z$:
\begin{equation*}
\dis{\min_{(x,z)}}~~z(T), 
\end{equation*}
where $z$ satisfies the differential equation
\begin{equation}
\dot{z}(t)=L\Bigl(t,x(t),\dot{x}(t), \frac{D^2 x}{dt^2}(t),z(t)\Bigr), \:\:t\in[0,T], \label{diffequation}
\end{equation}
subject to the initial condition  
\begin{equation} 
z(0)=z_0 \label{init_condition}
\end{equation} 
and where $x$ satisfies the boundary conditions
\begin{equation} 
x(0)=x_0, \quad x(T)=x_T, \quad \dot{x}(0)=v_0 \quad \mbox{and} \quad \dot{x}(T)=v_T, \label{bound_condition}
\end{equation}
for some $x_0,x_T\in S^n$, $v_0\in T_{x_0}S^n$, $v_T\in T_{x_T}S^n$ and $z_0,\,T\in\cR$.
\end{quote}

The Lagrangian $L$ is assumed to satisfy the following hypotheses:
\begin{enumerate}
\item $L\in C^1([0,T]\times T^2S^n\times \cR, \mathbb{R})$;
\item Functions  
\begin{itemize}
\item[] $\displaystyle t \mapsto \frac{\partial L}{\partial x}\left(t, x(t), \dot{x}(t),\omega(t), z(t)\right)$, 

\item[] $\displaystyle t \mapsto \frac{\partial L}{\partial \dot{x}}\left(t, x(t), \dot{x}(t), \omega(t),z(t)\right)$, 

\item[] $\displaystyle t \mapsto \frac{\partial L}{\partial \omega}\left(t, x(t), \dot{x}(t), \omega(t),z(t)\right)$,
\item[] $\displaystyle t \mapsto \frac{\partial L}{\partial z}\left(t, x(t), \dot{x}(t), \omega(t),z(t)\right)$ 
\end{itemize}
are differentiable  for any admissible trajectory $(x,z)$. 
\end{enumerate}

In above, we considered $\omega = \frac{D^2 x}{dt^2}$ and denoted by $\frac{\partial L}{\partial x}$, $\frac{\partial L}{\partial \dot{x}}$ and $\frac{\partial L}{\partial \omega}$ the  functional  partial derivatives of the function $L$ with respect to $x$, $\dot{x}$ and $\omega$, respectively, that are elements of $T^*S^n$.



Following the approach given in \cite{2016AbrMacMar}, we will look to the above problem as a constrained optimization problem on the embedding Euclidean space $\cR^{n+1}$, where the trajectory $x$ is seen as a curve in $\cR^{n+1}$ satisfying the holonomic constraint
\begin{equation} \label{eq:constraint}
\bigl\langle x(t),x(t)\bigr\rangle=1,~~\forall t\in[0,T].
\end{equation}
In this case, $L\in C^1([0,T]\times\cR^{3n+3}\times\mathbb{R}, \mathbb{R})$.

An admissible variation of a solution $x$ of the constrained optimization problem can be defined by $x_{\varepsilon}:=x+\varepsilon h$, where  $\varepsilon$ is a real parameter and $h\in C^4([0,T],\cR^{n+1})$ is such that 
\begin{equation} \label{bound_cond}
h(0)=h(T)=
\dot{h}(0)=\dot{h}(T)=0.
\end{equation}
Assume also that $\ddot{h}(0)=0$.

For the sake of simplicity we sometimes suppress the arguments of the functions.

\begin{theorem} \label{T:ELsecondOrder}
If $(x,z)$ is a solution of problem {\rm ($\P_V$)}, then it satisfies the generalized Euler--Lagrange equation
\begin{align}
\begin{split}\label{E-L(2)}
& \dfrac{D^2}{dt^2}\Bigl(\dfrac{\partial L}{\partial \omega}\Bigr)-\dfrac{D}{dt}\Bigl(\dfrac{\partial L}{\partial \dot{x}}\Bigr) -\Bigl\langle \dfrac{\partial L}{\partial x},x\Bigr\rangle x-\bigl\langle \ddot{x},x\bigr\rangle \dfrac{\partial L}{\partial \omega}-2 \Bigl\langle \frac{\partial L}{\partial \omega}, 
\dot{x} \Bigr\rangle \dot{x}- 2 \Bigl\langle \frac{\partial L}{\partial \omega}, 
x \Bigr\rangle \ddot{x}  \\
& +\dfrac{\partial L}{\partial x}-\Bigl\langle \dfrac{d}{dt}\Bigl(\dfrac{\partial L}{\partial \omega}\Bigr),x\Bigr\rangle \dot{x}+3\Bigl\langle  \dfrac{\partial L}{\partial \omega},x\Bigr\rangle\bigl\langle\ddot{x},x\bigr\rangle x +\Bigl(\dfrac{\partial L}{\partial z}\Bigr)^2 \Bigl( \dfrac{\partial L}{\partial \omega}-\Bigl\langle \dfrac{\partial L}{\partial \omega},x\Bigr\rangle x\Bigr)
\\
&-\dfrac{D}{dt}\Bigl(\dfrac{\partial L}{\partial z} \dfrac{\partial L}{\partial \omega}\Bigr) -\dfrac{\partial L}{\partial z} \Bigl[ \dfrac{D}{dt}\Bigl(\dfrac{\partial L}{\partial \omega}\Bigr)- \dfrac{\partial L}{\partial \dot{x}} +\Bigl\langle \dfrac{\partial L}{\partial \dot{x}} ,x\Bigr\rangle x 
- 2 \Bigl\langle\dfrac{\partial L}{\partial \omega},x\Bigr\rangle \dot{x}\Bigr] =0.
\end{split}
\end{align}
\end{theorem}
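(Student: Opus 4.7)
The plan is to follow the embedding strategy of \cite{2016AbrMacMar}: regard any admissible curve $x:[0,T]\to S^n$ as a curve in $\cR^{n+1}$ subject to the holonomic constraint \eqref{eq:constraint}, introduce variations $x_\varepsilon=x+\varepsilon h$ with $h\in C^4([0,T],\cR^{n+1})$ satisfying \eqref{bound_cond} and $\ddot{h}(0)=0$, and adjoin \eqref{eq:constraint} through a Lagrange multiplier $\mu(t)$ so that $h$ may be chosen freely in the Euclidean sense; the normal component of the resulting equation is absorbed by $\mu$ and removed at the end by projecting onto $T_{x(t)}S^n$.

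First I would quantify how the cost reacts to the variation. Setting $\psi(t):=\partial_\varepsilon z_\varepsilon(t)|_{\varepsilon=0}$ and differentiating \eqref{diffequation} in $\varepsilon$ at $\varepsilon=0$ produces the scalar linear ODE
\begin{equation*}
\dot\psi-\frac{\partial L}{\partial z}\,\psi=\frac{\partial L}{\partial x}\cdot h+\frac{\partial L}{\partial\dot x}\cdot\dot h+\frac{\partial L}{\partial\omega}\cdot\delta\omega,\qquad \psi(0)=0,
\end{equation*}
where a direct computation from \eqref{cov_sph} yields
\begin{equation*}
\delta\omega=\ddot h-\langle\ddot h,x\rangle\,x-\langle\ddot x,h\rangle\,x-\langle\ddot x,x\rangle\,h.
\end{equation*}
Introducing the strictly positive integrating factor $\Lambda(t):=\exp\bigl(-\int_0^t \frac{\partial L}{\partial z}(s)\,ds\bigr)$, the necessary condition $\psi(T)=0$ (which must hold for every admissible $h$) is equivalent to
\begin{equation*}
\int_0^T \Lambda(t)\,\Bigl[\frac{\partial L}{\partial x}\cdot h+\frac{\partial L}{\partial\dot x}\cdot\dot h+\frac{\partial L}{\partial\omega}\cdot\delta\omega\Bigr]\,dt=0.
\end{equation*}

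Next I would integrate by parts once on the $\dot h$-term and twice on the $\ddot h$-pieces inside $\delta\omega$; the endpoint conditions \eqref{bound_cond} annihilate almost every boundary contribution, and the auxiliary hypothesis $\ddot h(0)=0$ absorbs the only term that would otherwise survive at $t=0$. Using $\dot\Lambda=-\frac{\partial L}{\partial z}\Lambda$ and $\ddot\Lambda=\bigl[(\frac{\partial L}{\partial z})^2-\frac{d}{dt}\frac{\partial L}{\partial z}\bigr]\Lambda$, the derivatives that fall on $\Lambda$ regroup precisely into the cluster $-\frac{d}{dt}\bigl(\frac{\partial L}{\partial z}\frac{\partial L}{\partial\omega}\bigr)-\frac{\partial L}{\partial z}\frac{D}{dt}\bigl(\frac{\partial L}{\partial\omega}\bigr)+\bigl(\frac{\partial L}{\partial z}\bigr)^2\frac{\partial L}{\partial\omega}$ visible in \eqref{E-L(2)}. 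Since $\Lambda$ is continuous and positive it can be factored out, and the fundamental lemma of the calculus of variations applied to the resulting identity
\begin{equation*}
\int_0^T \bigl\langle E(t),h(t)\bigr\rangle\,dt=0
\end{equation*}
yields a pointwise vector equation $E(t)=0$ in $\cR^{n+1}$, where $E(t)$ gathers $\frac{d^2}{dt^2}\frac{\partial L}{\partial\omega}$, $\frac{d}{dt}\frac{\partial L}{\partial\dot x}$, $\frac{\partial L}{\partial x}$ together with the algebraic terms $\langle\ddot x,x\rangle\frac{\partial L}{\partial\omega}$, $\langle\frac{\partial L}{\partial\omega},\dot x\rangle\dot x$ and $\langle\frac{\partial L}{\partial\omega},x\rangle\ddot x$ inherited from the projections in $\delta\omega$.

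Finally, appending the constraint $\langle x,x\rangle=1$ with Lagrange multiplier $\mu(t)$ promotes the identity to $E(t)=-2\mu(t)\,x(t)$; pairing with $x(t)$ determines $\mu$, and subtracting the $x$-component (equivalently, projecting onto $T_{x(t)}S^n$) eliminates it and produces a tangent-valued equation intrinsic to $S^n$. The concluding step is essentially notational: the ordinary derivatives $\frac{d}{dt}$ and $\frac{d^2}{dt^2}$ acting on the tangent-valued fields $\frac{\partial L}{\partial\dot x}$ and $\frac{\partial L}{\partial\omega}$ are rewritten as $\frac{D}{dt}$ and $\frac{D^2}{dt^2}$ via \eqref{DCovaESFERA}, which automatically moves several of the Euclidean correction terms into the covariant combination displayed in \eqref{E-L(2)}. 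The main obstacle I anticipate is precisely this bookkeeping phase: after the two integrations by parts the integrand is a cluttered sum of more than a dozen $\cR^{n+1}$-valued contributions, and checking that each one either cancels against the normal correction $2\mu\,x$ or reorganises into the exact tangential cluster $\tfrac{D^2}{dt^2}(\tfrac{\partial L}{\partial\omega})-\tfrac{D}{dt}(\tfrac{\partial L}{\partial\dot x})+\tfrac{\partial L}{\partial x}+\cdots$ appearing in \eqref{E-L(2)} is a delicate calculation that must be carried out component by component.
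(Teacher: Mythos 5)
Your proposal is correct and follows essentially the same route as the paper's own proof: the first variation $\xi$ of $z$ satisfies the same linear ODE with the same $\delta\omega$, the same integrating factor $\e^{-\int_0^t \partial L/\partial z\,d\tau}$ is used, and the multiplier for $\langle x,x\rangle=1$ is likewise determined by pairing the pointwise Euler--Lagrange identity with $x$ before rewriting everything covariantly. The only cosmetic differences are that the paper carries two multipliers $\lambda,\mu$ (so that $2\mu/\lambda$ appears) and invokes $\ddot{h}(0)=0$ to argue $\xi(0)=0$ rather than in the integration by parts.
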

\begin{proof}
In order to find first order necessary conditions for the constrained optimization problem, let us consider the functional defined by
\begin{equation*}
J\bigl(t,x,\dot{x},\omega,z,\lambda, \mu\bigr)=z(T)+\lambda\Bigl(\dot{z}-L\bigl(t,x,\dot{x},\omega,z\bigr)\Bigr)+\mu \bigl(\bigl\langle x,x\bigr\rangle-1\bigr),
\end{equation*}
where the scalar functions $\lambda$ and $\mu$ are Lagrange multipliers.
Therefore, a necessary condition for $x$ to be a solution for the proposed optimization problem is
\begin{equation} \label{first}
\left.\dfrac{d}{d\varepsilon}J\bigl(t,x_{\varepsilon},\dot{x}_{\varepsilon},\ddot{x}_{\varepsilon}- \bigl\langle \ddot{x}_{\varepsilon}, x_{\varepsilon}\bigr\rangle x_{\varepsilon},z,\lambda, \mu\bigr)\right|_{\varepsilon=0}=0, 
\end{equation}
for all admissible admissible variations $x_{\varepsilon}$ of  $x$. Let $\xi$  be the first variation of $z$:
\begin{equation*}
\xi(t)=\left.\dfrac{d}{d\varepsilon}z\bigl(t,x_{\varepsilon},\dot{x}_{\varepsilon},\ddot{x}_{\varepsilon}- \bigl\langle \ddot{x}_{\varepsilon}, x_{\varepsilon}\bigr\rangle x_{\varepsilon}\bigr)\right|_{\varepsilon=0}, 
\end{equation*}
where for simplicity of notation we suppressed the dependence of $t$ on $x_{\varepsilon}$.

Notice that $h(0)=\dot{h}(0)=\ddot{h}(0)=0$ and so  $\xi(0)=0$. In order to conclude that $\xi(T)=0$, define the real valued function $g$ by
$$
g(\varepsilon)= z\bigl(T,x_{\varepsilon},\dot{x}_{\varepsilon},\ddot{x}_{\varepsilon}- \bigl\langle \ddot{x}_{\varepsilon}, x_{\varepsilon}\bigr\rangle x_{\varepsilon}\bigr).
$$
Since, by hypothesis, $T$ is a minimizer of $z$, one concludes that $0$ is a minimizer of $g$.  So,  the differentiability of the function $g$ implies that
\begin{flalign*}0=g'(0)=\left.\dfrac{d}{d\varepsilon}z\bigl(T,x_{\varepsilon},\dot{x}_{\varepsilon},\ddot{x}_{\varepsilon}- \bigl\langle \ddot{x}_{\varepsilon}, x_{\varepsilon}\bigr\rangle x_{\varepsilon}\bigr)\right|_{\varepsilon=0}=\xi(T).
\end{flalign*} 

Moreover, we have
\begin{flalign*}
\dot{\xi}(t)&=
\left.\dfrac{d}{d\varepsilon}L\bigl(t,x_{\varepsilon},\dot{x}_{\varepsilon},\ddot{x}_{\varepsilon}- \bigl\langle \ddot{x}_{\varepsilon}, x_{\varepsilon}\bigr\rangle x_{\varepsilon} ,z\bigr)\right|_{\varepsilon=0} . 
\end{flalign*}

Therefore,
\begin{align*}
& \left.\dfrac{d}{d\varepsilon}J\bigl(t,x_{\varepsilon},\dot{x}_{\varepsilon},\ddot{x}_{\varepsilon}- \bigl\langle \ddot{x}_{\varepsilon}, x_{\varepsilon}\bigr\rangle x_{\varepsilon},z,\lambda, \mu\bigr)\right|_{\varepsilon=0} = \lambda \Bigl( \dfrac{d \xi}{d t} - \Bigl\langle \dfrac{\partial L}{\partial x},h \Bigr\rangle -  \Bigl\langle \dfrac{\partial L}{\partial \dot{x}},\dot{h}\Bigr\rangle \Bigr)\\
&\quad -  \lambda \dfrac{\partial L}{\partial z} \xi - \lambda\Bigl\langle \dfrac{\partial L}{\partial \omega},\ddot{h} -\langle \ddot{h},x \rangle x -\langle \ddot{x},h\rangle x-\langle \ddot{x},x\rangle h\Bigr\rangle +2\mu \bigl\langle x,h\bigr\rangle.
\end{align*}
Now condition (\ref{first}) is equivalent to 
\begin{align*}
 \dfrac{d \xi}{d t} - \dfrac{\partial L}{\partial z} \xi &= \Bigl\langle \dfrac{\partial L}{\partial x} -\Bigl\langle \dfrac{\partial L}{\partial \omega},x\Bigr\rangle\ddot{x} - \langle \ddot{x},x\rangle \dfrac{\partial L}{\partial \omega} -2\frac{\mu}{\lambda} x ,h \Bigr\rangle + \Bigl\langle \dfrac{\partial L}{\partial \dot{x}},\dot{h}\Bigr\rangle \\
& \quad +\Bigl\langle \dfrac{\partial L}{\partial \omega} -\Bigl\langle \frac{\partial L}{\partial \omega}, x \Bigr\rangle x,\ddot{h}\Bigr\rangle
\end{align*}
which is a first order linear differential equation. Multiply both members of the above equation by 
$
I(t)=\e^{-\int_0^t \frac{\partial L}{\partial z}\,d\tau},
$
to get
\begin{align*}
\quad \dfrac{d}{dt}\bigl( I(t) \xi (t) \bigr) &=  I(t) \Bigl[\Bigl\langle \dfrac{\partial L}{\partial x} -\Bigl\langle \dfrac{\partial L}{\partial \omega},x\Bigr\rangle\ddot{x} - \langle \ddot{x},x\rangle \dfrac{\partial L}{\partial \omega} -2\frac{\mu}{\lambda} x ,h \Bigr\rangle + \Bigl\langle \dfrac{\partial L}{\partial \dot{x}},\dot{h}\Bigr\rangle\Bigr.\\
& \quad\qquad \Bigl.+\Bigl\langle \dfrac{\partial L}{\partial \omega} -\Bigl\langle \frac{\partial L}{\partial \omega}, x \Bigr\rangle x,\ddot{h}\Bigr\rangle\Bigr] .
 \end{align*}
Now, integrating both sides of the previous equation and using the fact that $\xi(0)=0$ and the boundary conditions (\ref{bound_cond}), it yields
\begin{align*}
\begin{split}
&\quad ~I(t) \xi (t)\\
&= \int_0^t I(\tau) \Bigl\langle  \dfrac{\partial L}{\partial x}-\dfrac{d}{d\tau}\Bigl(\dfrac{\partial L}{\partial \dot{x}}\Bigr)-\Bigl\langle \dfrac{\partial L}{\partial \omega},x\Bigr\rangle\ddot{x}- \langle \ddot{x},x\rangle \dfrac{\partial L}{\partial \omega} + \dfrac{\partial L}{\partial z}\dfrac{\partial L}{\partial \dot{x}}  -2\frac{\mu}{\lambda} x,h \Bigr\rangle\, d \tau 
\\
& \quad -\Bigl. I(\tau)\Bigl\langle \dfrac{d}{d\tau}\Bigl(\dfrac{\partial L}{\partial \omega} -\Bigl\langle \frac{\partial L}{\partial \omega}, x \Bigr\rangle x\Bigr) - \dfrac{\partial L}{\partial z}\Bigl(\dfrac{\partial L}{\partial \omega} - \Bigl\langle \dfrac{\partial L}{\partial \omega},x\Bigr\rangle x\Bigr)-\left.\dfrac{\partial L}{\partial \dot{x}},h \Bigr\rangle\right|_0^{t}
\\
&\quad + \Bigl. I(\tau) \Bigl\langle \dfrac{\partial L}{\partial \omega} -\left.\Bigl\langle \frac{\partial L}{\partial \omega}, x \Bigr\rangle x ,\dot{h} \Bigr\rangle\right|_{0}^t 
\\
&\quad +\int_0^t I(\tau) 
\Bigl\langle \dfrac{d^2}{d\tau^2}\Bigl(\dfrac{\partial L}{\partial \omega}-\Bigl\langle \frac{\partial L}{\partial \omega}, x \Bigr\rangle x \Bigr) -\dfrac{d}{d\tau}\Bigl(\dfrac{\partial L}{\partial z} \dfrac{\partial L}{\partial \omega}\Bigr) +\Bigl\langle \dfrac{d}{d\tau}\Bigl(\dfrac{\partial L}{\partial z}\dfrac{\partial L}{\partial \omega}\Bigr),x\Bigr\rangle x 
\\
& 
\quad +\dfrac{\partial L}{\partial z} \Bigl\langle\dfrac{\partial L}{\partial \omega},\dot{x}\Bigr\rangle x +\dfrac{\partial L}{\partial z}\Bigl\langle  \dfrac{\partial L}{\partial \omega},x\Bigr\rangle \dot{x} -\dfrac{\partial L}{\partial z}  \dfrac{d}{d\tau}\Bigl(\dfrac{\partial L}{\partial \omega} -\Bigl\langle \frac{\partial L}{\partial \omega}, x \Bigr\rangle x\Bigr)
\\
& \quad +\Bigl(\dfrac{\partial L}{\partial z}\Bigr)^2\Bigl( \dfrac{\partial L}{\partial \omega}-\Bigl\langle  \dfrac{\partial L}{\partial \omega},x\Bigr\rangle x\Bigr),h \Bigr\rangle d \tau.
\end{split}
\end{align*}
Evaluate the above for $t=T$, take into account the boundary conditions (\ref{bound_cond}), the fact that $\xi(T)=0$ and write
\begin{flalign*}
& \int_0^T I(t) 
\Bigl\langle \dfrac{d^2}{d\tau^2}\Bigl(\dfrac{\partial L}{\partial \omega}-\Bigl\langle \frac{\partial L}{\partial \omega}, x \Bigr\rangle x \Bigr) -\dfrac{d}{d\tau}\Bigl(\dfrac{\partial L}{\partial z} \dfrac{\partial L}{\partial \omega}\Bigr) +\Bigl\langle \dfrac{d}{d\tau}\Bigl(\dfrac{\partial L}{\partial z}\dfrac{\partial L}{\partial \omega}\Bigr),x\Bigr\rangle x \\
&
\quad +\dfrac{\partial L}{\partial z} \Bigl\langle\dfrac{\partial L}{\partial \omega},\dot{x}\Bigr\rangle x +\dfrac{\partial L}{\partial z} \Bigl\langle \dfrac{\partial L}{\partial \omega},x\Bigr\rangle \dot{x} -\dfrac{\partial L}{\partial z}  \dfrac{d}{d\tau}\Bigl(\dfrac{\partial L}{\partial \omega} -\Bigl\langle \frac{\partial L}{\partial \omega}, x \Bigr\rangle x\Bigr)\\
&\quad +\Bigl(\dfrac{\partial L}{\partial z}\Bigr)^2 \Bigl(\dfrac{\partial L}{\partial \omega}-\Bigl\langle \dfrac{\partial L}{\partial \omega},x\Bigr\rangle x\Bigr) + \dfrac{\partial L}{\partial x}-\Bigl\langle \dfrac{\partial L}{\partial \omega},x\Bigr\rangle\ddot{x}- \langle \ddot{x},x\rangle \dfrac{\partial L}{\partial \omega}\\
& \quad -\dfrac{d}{d\tau}\Bigl(\dfrac{\partial L}{\partial \dot{x}}\Bigr) + 
 \dfrac{\partial L}{\partial z}\dfrac{\partial L}{\partial \dot{x}}  -2\frac{\mu}{\lambda} x  ,h \Bigr\rangle d \tau=0.
\end{flalign*} 
Since the above condition holds for all admissible curves $h$ and  $I$ is a positive real valued function, one must have
\begin{flalign}
\begin{split}
&
\dfrac{d^2}{dt^2}\Bigl(\dfrac{\partial L}{\partial \omega}-\Bigl\langle \frac{\partial L}{\partial \omega}, x \Bigr\rangle x \Bigr)-\dfrac{d}{dt}\Bigl(\dfrac{\partial L}{\partial \dot{x}}\Bigr) + \dfrac{\partial L}{\partial x}-\langle \ddot{x},x\rangle \dfrac{\partial L}{\partial \omega} -\Bigl\langle \dfrac{\partial L}{\partial \omega},x\Bigr\rangle\ddot{x}
\\
& -\dfrac{\partial L}{\partial z}  \dfrac{d}{dt}\Bigl(\dfrac{\partial L}{\partial \omega} -\Bigl\langle \frac{\partial L}{\partial \omega}, x \Bigr\rangle x\Bigr)  -\dfrac{d}{dt}\Bigl(\dfrac{\partial L}{\partial z} \dfrac{\partial L}{\partial \omega}\Bigr) +\Bigl\langle \dfrac{d}{dt}\Bigl(\dfrac{\partial L}{\partial z}\dfrac{\partial L}{\partial \omega}\Bigr),x\Bigr\rangle x 
+\dfrac{\partial L}{\partial z}\dfrac{\partial L}{\partial \dot{x}}\\
& 
+\dfrac{\partial L}{\partial z} \Bigl\langle\dfrac{\partial L}{\partial \omega},\dot{x}\Bigr\rangle x +\dfrac{\partial L}{\partial z} \Bigl\langle \dfrac{\partial L}{\partial \omega},x\Bigr\rangle \dot{x}  +\Bigl(\dfrac{\partial L}{\partial z}\Bigr)^2 \Bigl(\dfrac{\partial L}{\partial \omega}-\Bigl\langle  \dfrac{\partial L}{\partial \omega},x\Bigr\rangle x\Bigr) -2\frac{\mu}{\lambda} x =0.
\end{split} \label{equacao}
\end{flalign} 
In order to proceed, note that
\begin{flalign*}
&
\quad ~\dfrac{d}{dt}\Bigl(\dfrac{\partial L}{\partial \omega}-\Bigl\langle \frac{\partial L}{\partial \omega}, x \Bigr\rangle x \Bigr) =  \dfrac{D}{dt}\Bigl(\dfrac{\partial L}{\partial \omega}\Bigr)-\Bigl\langle \frac{\partial L}{\partial \omega}, \dot{x} \Bigr\rangle x-\Bigl\langle \frac{\partial L}{\partial \omega}, x \Bigr\rangle \dot{x} 
\end{flalign*}
and
\begin{flalign*}
 \dfrac{d^2}{dt^2}\Bigl(\dfrac{\partial L}{\partial \omega}-\Bigl\langle \frac{\partial L}{\partial \omega}, x \Bigr\rangle x \Bigr) &= \dfrac{D^2}{dt^2}\Bigl(\dfrac{\partial L}{\partial \omega}\Bigr) -  2 \Bigl\langle \dfrac{d}{dt}\Big(\frac{\partial L}{\partial \omega}\Bigr), \dot{x} \Bigr\rangle x -  \Bigl\langle \dfrac{d}{dt}\Big(\frac{\partial L}{\partial \omega}\Bigr), 
x \Bigr\rangle \dot{x}\\
& \quad -2 \Bigl\langle \frac{\partial L}{\partial \omega}, \dot{x} \Bigr\rangle \dot{x} -\Bigl\langle \frac{\partial L}{\partial \omega}, \ddot{x} \Bigr\rangle x -\Bigl\langle \frac{\partial L}{\partial \omega}, x \Bigr\rangle \ddot{x}.
\end{flalign*} 
Replacing the above into (\ref{equacao}), taking the inner product of the resulting expression with $x$ and using also the fact that $\langle x,x\rangle=1$, 
the scalar function $2\frac{\mu}{\lambda}$ becomes
\begin{flalign*}
2\frac{\mu}{\lambda} =& -2 \Bigl\langle \dfrac{d}{dt}\Big(\frac{\partial L}{\partial \omega}\Bigr), \dot{x} \Bigr\rangle -\Bigl\langle \frac{\partial L}{\partial \omega}, \ddot{x} \Bigr\rangle- 3 \Bigl\langle \frac{\partial L}{\partial \omega}, x \Bigr\rangle \bigl\langle\ddot{x},x\bigr\rangle - \Bigl\langle \dfrac{d}{dt}\Big(\frac{\partial L}{\partial \dot{x}}\Bigr), x\Bigr\rangle  \\
& + \Bigl\langle \frac{\partial L}{\partial x}, x \Bigr\rangle+2 \frac{\partial L}{\partial z} \Bigl\langle \frac{\partial L}{\partial \omega}, \dot{x} \Bigr\rangle + \frac{\partial L}{\partial z} \Bigl\langle \frac{\partial L}{\partial \dot{x}}, x \Bigr\rangle.
\end{flalign*}
Plugging the obtained expression for $2 \frac{\mu}{\lambda}$ into (\ref{equacao}), we get the desired result. 
\end{proof}

The next result follows immediately from the above theorem.

\begin{corollary} {\rm(\cite{2016AbrMacMar})} \label{cor1}
If $L$ does not depend on $\omega$, then problem {\rm ($\P_V$)} coincides with the first--order Herglotz problem and the Euler--Lagrange equation reduces to
\begin{align*}\label{E-L-z-1}
\dfrac{D}{dt}\Bigl(\dfrac{\partial L}{\partial \dot{x}}\Bigr)-\dfrac{\partial L}{\partial x}+\Bigl\langle \dfrac{\partial L}{\partial x},x\Bigr\rangle x -\dfrac{\partial L}{\partial z} \Bigl(\dfrac{\partial L}{\partial \dot{x}} -\Bigl\langle \dfrac{\partial L}{\partial \dot{x}} ,x\Bigr\rangle x 
\Bigr) =0.
\end{align*}
\end{corollary}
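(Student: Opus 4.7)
The plan is to obtain the corollary as a direct specialization of Theorem \ref{T:ELsecondOrder}, by setting $\frac{\partial L}{\partial \omega}\equiv 0$ in the generalized Euler--Lagrange equation (\ref{E-L(2)}) and observing that the bulk of the terms collapse. Since the hypothesis that $L$ is independent of $\omega$ means that problem ($\P_V$) reduces to the first--order Herglotz problem on $S^n$ considered in \cite{2016AbrMacMar}, no new variational argument is needed; everything is already encoded in equation (\ref{E-L(2)}).

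Concretely, I would first go through (\ref{E-L(2)}) term by term and mark those that contain $\frac{\partial L}{\partial \omega}$ or any of its time derivatives. These are:
\begin{equation*}
\dfrac{D^2}{dt^2}\Bigl(\dfrac{\partial L}{\partial \omega}\Bigr),\ \bigl\langle \ddot{x},x\bigr\rangle \dfrac{\partial L}{\partial \omega},\ 2\Bigl\langle \dfrac{\partial L}{\partial \omega},\dot{x}\Bigr\rangle \dot{x},\ 2\Bigl\langle \dfrac{\partial L}{\partial \omega},x\Bigr\rangle \ddot{x},\ \Bigl\langle \dfrac{d}{dt}\Bigl(\dfrac{\partial L}{\partial \omega}\Bigr),x\Bigr\rangle \dot{x},
\end{equation*}
\begin{equation*}
3\Bigl\langle \dfrac{\partial L}{\partial \omega},x\Bigr\rangle\bigl\langle\ddot{x},x\bigr\rangle x,\ \Bigl(\dfrac{\partial L}{\partial z}\Bigr)^{\!2}\!\!\Bigl(\dfrac{\partial L}{\partial \omega}-\Bigl\langle \dfrac{\partial L}{\partial \omega},x\Bigr\rangle x\Bigr),\ \dfrac{D}{dt}\Bigl(\dfrac{\partial L}{\partial z}\dfrac{\partial L}{\partial \omega}\Bigr),\ \dfrac{\partial L}{\partial z}\dfrac{D}{dt}\Bigl(\dfrac{\partial L}{\partial \omega}\Bigr),\ 2\dfrac{\partial L}{\partial z}\Bigl\langle\dfrac{\partial L}{\partial \omega},x\Bigr\rangle \dot{x}.
\end{equation*}
Under the hypothesis of the corollary, each of these vanishes identically.

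What remains of (\ref{E-L(2)}) is
\begin{equation*}
-\dfrac{D}{dt}\Bigl(\dfrac{\partial L}{\partial \dot{x}}\Bigr)-\Bigl\langle \dfrac{\partial L}{\partial x},x\Bigr\rangle x+\dfrac{\partial L}{\partial x}+\dfrac{\partial L}{\partial z}\Bigl(\dfrac{\partial L}{\partial \dot{x}}-\Bigl\langle \dfrac{\partial L}{\partial \dot{x}},x\Bigr\rangle x\Bigr)=0,
\end{equation*}
and multiplying this identity by $-1$ yields exactly the displayed Euler--Lagrange equation of the corollary. The only thing to be careful about is the sign inside the bracket multiplied by $\frac{\partial L}{\partial z}$, where the minus sign from the outer factor in (\ref{E-L(2)}) combines with the $-\frac{\partial L}{\partial \dot{x}}+\langle\frac{\partial L}{\partial \dot{x}},x\rangle x$ inside the square brackets.

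There is no genuine obstacle here; the proof is bookkeeping. The only place where one should pause is to verify that no term involving $\omega$ survives implicitly through the Lagrangian's dependence on its arguments (it does not, since the hypothesis is a pointwise vanishing of $\frac{\partial L}{\partial \omega}$). After this sanity check, the proof consists solely of writing the single line above and remarking that the obtained equation coincides with the first--order Euler--Lagrange equation on $S^n$ derived in \cite{2016AbrMacMar}, which closes the argument.
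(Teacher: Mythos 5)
Your proposal is correct and is exactly the paper's argument: the paper gives no separate proof, stating only that the corollary "follows immediately from the above theorem," which is precisely the term-by-term specialization $\frac{\partial L}{\partial \omega}\equiv 0$ in (\ref{E-L(2)}) followed by multiplication by $-1$ that you carry out. Your bookkeeping of the vanishing terms and of the sign inside the $\frac{\partial L}{\partial z}$ bracket checks out.
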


From Theorem \ref{T:ELsecondOrder}, we also obtain the following result that gives the Euler--Lagrange equation for the second--order problem  of the classical calculus of variations on $S^n$ written using covariant derivatives. The result is obtained immediately from the generalized Euler--Lagrange equation (\ref{E-L(2)}) if one considers $\frac{\partial L}{\partial z}=0$. To the best of the authors knowledge, this result is not known in the literature.  

\begin{corollary} \label{cor2}
The Euler--Lagrange equation for the classical variational problem \vspace*{-.2cm}
\begin{equation*}
\min_{x\in C^4([0,T],S^n)}  ~ \dis\int_0^T L(t,x,\dot{x},\omega)\,dt
\end{equation*}
subject to the boundary conditions (\ref{bound_condition}) is 
\begin{align}
\begin{split}\label{E-L-no-z}
& \dfrac{D^2}{dt^2}\Bigl(\dfrac{\partial L}{\partial \omega}\Bigr) -\dfrac{D}{dt}\Bigl(\dfrac{\partial L}{\partial \dot{x}}\Bigr)+\dfrac{\partial L}{\partial x}-\Bigl\langle \dfrac{\partial L}{\partial x},x\Bigr\rangle x -\bigl\langle \ddot{x},x\bigr\rangle \dfrac{\partial L}{\partial \omega}-\Bigl\langle \dfrac{d}{dt}\Bigl(\dfrac{\partial L}{\partial \omega}\Bigr),x\Bigr\rangle \dot{x}\\
&- 2 \Bigl\langle \frac{\partial L}{\partial \omega}, 
x \Bigr\rangle \ddot{x}  -2 \Bigl\langle \frac{\partial L}{\partial \omega}, 
\dot{x}\Bigr\rangle \dot{x} +
3\Bigl\langle  \dfrac{\partial L}{\partial \omega},x\Big\rangle\bigl\langle\ddot{x},x\bigr\rangle x =0.
\end{split}
\end{align}
\end{corollary}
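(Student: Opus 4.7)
The plan is to recognize the corollary as a direct specialization of Theorem \ref{T:ELsecondOrder} to the case in which the Lagrangian is independent of the scalar variable $z$. First I would recast the classical variational problem as an instance of Problem ($\P_V$): introduce an auxiliary scalar $z$ defined by $\dot z = L(t, x, \dot x, \omega)$ with $z(0) = 0$, so that $z(T) = \int_0^T L\, dt$. Minimizing the terminal value of $z$ then coincides with minimizing the classical integral functional, the boundary conditions (\ref{bound_condition}) pass through unchanged, and the regularity hypotheses of ($\P_V$) are automatically satisfied; crucially, the Lagrangian of this Herglotz instance has $\partial L/\partial z \equiv 0$.

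Next I would apply Theorem \ref{T:ELsecondOrder} to this instance and substitute $\partial L/\partial z = 0$ directly into (\ref{E-L(2)}). Every summand on the third line of (\ref{E-L(2)}) carries an explicit factor of $\partial L/\partial z$, including the total covariant derivative $\frac{D}{dt}\bigl(\frac{\partial L}{\partial z}\frac{\partial L}{\partial \omega}\bigr)$, whose argument vanishes identically, so the whole line drops. Likewise the $z$--dependent summand $\bigl(\frac{\partial L}{\partial z}\bigr)^{2}\bigl(\frac{\partial L}{\partial \omega} - \langle \frac{\partial L}{\partial \omega}, x\rangle x\bigr)$ in the second line disappears.

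Collecting the surviving nine summands and reordering them term by term produces exactly the expression on the left--hand side of (\ref{E-L-no-z}). No substantive obstacle is anticipated — the computation is pure bookkeeping. The only point that deserves a careful check is the equivalence step: one must confirm that admissible pairs $(x, z)$ of the Herglotz instance are in bijection with admissible curves $x$ of the classical problem, and that the two cost functionals agree up to the additive constant $z(0)$. Both assertions follow immediately from the explicit definition of $z$ and the $C^{1}$--regularity of $L$, so the reduction is legitimate and (\ref{E-L-no-z}) indeed furnishes the Euler--Lagrange equation for the classical second--order variational problem on $S^{n}$.
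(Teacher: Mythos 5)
Your proposal is correct and follows exactly the paper's route: the paper also derives Corollary \ref{cor2} by setting $\frac{\partial L}{\partial z}=0$ in the generalized Euler--Lagrange equation (\ref{E-L(2)}), and the surviving terms match (\ref{E-L-no-z}) term by term. Your explicit justification of the reduction (introducing $z$ with $\dot z = L$, $z(0)=0$, so that $z(T)=\int_0^T L\,dt$) is a detail the paper leaves implicit, but it is the same argument.
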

\vspace*{.2cm}

\begin{remark} {\rm 
If the geometry of the configuration space is not taken into account, 
the Euler--Lagrange equation {\rm (\ref{E-L(2)})} becomes
\begin{equation*} 
\dfrac{\partial L}{\partial x}-\dfrac{d}{dt}\Bigl(\dfrac{\partial L}{\partial \dot{x}}\Bigr)+\dfrac{d^2}{dt^2}\Bigl(\dfrac{\partial L}{\partial \ddot{x}}\Bigr)+\dfrac{\partial L}{\partial z}\dfrac{\partial L}{\partial \dot{x}}-\dfrac{\partial L}{\partial z}  \dfrac{d}{dt}\Bigl(\dfrac{\partial L}{\partial \ddot{x}} \Bigr)  -\dfrac{d}{dt}\Bigl(\dfrac{\partial L}{\partial z} \dfrac{\partial L}{\partial \ddot{x}}\Bigr)+\Bigl(\dfrac{\partial L}{\partial z}\Bigr)^2 \dfrac{\partial L}{\partial \ddot{x}}=0.
\end{equation*}
As it can be easily checked, this corresponds exactly to the Euler--Lagrange equation for the second--order Herglotz problem in Euclidean spaces studied in \rm{\cite{Simao+Delfim+Natalia2014}}. }
\end{remark}

\section{The Second--Order Herglotz Problem on $S^n$ from an Optimal Control Viewpoint} \label{OControlP}

In this section, we define an optimal control problem corresponding to the second--order generalized Herglotz problem introduced in Section \ref{2stOrderVP} and prove that its extremals, in case of existence, satisfy a certain set of Hamiltonian equations.
\begin{quote}
{\textbf{Problem ($\P_C$)}:} 
{Determine the control $u\in C^2([0,T],T^2S^n)$ that minimizes the final value of the function $z$:
\begin{equation*}
\dis{\min_{u}}~~z(T), 
\end{equation*}
where $x\in C^4\bigl([0,T],S^n\bigr)$, $v\in C^3\bigl([0,T],TS^n\bigr)$ and $z\in C^1\bigl([0,T],\cR\bigr)$ satisfy  the control system
\[
\left\{
\begin{array}{l}
\dot{x}(t)=v(t)
\\
\dot{v}(t)=u(t)
\\
\dot{z}(t)=L\Bigl(t,x(t),v(t),u(t)-\langle u(t), x(t)\rangle x(t),z(t)\Bigr),
\end{array}
\right.
\]
subject to the boundary conditions (\ref{init_condition})--(\ref{bound_condition}).
}
\end{quote}


From a geometric point of view, the state space of the control problem is $TS^n\times \mathbb{R}$ and the control bundle is the tangent bundle $T^2S^n$. 


Analogously to what has been done in the variational approach, one will look at the above optimal control problem as a pure constrained optimal control problem in the Euclidean space $\cR^{n+1}$ subject to (\ref{eq:constraint}): 

\begin{quote}
{\textbf{Problem  ($\overline{\P}_C$)}:}  
Determine the control  $u\in C^2([0,T],\cR^{n+1})$ 
that minimizes the final value of the function $z(T)$:
\begin{equation*}
\dis{\min_u} ~~z(T),
\end{equation*}
where the trajectories $x\in C^4([0,T],\cR^{n+1})$, $v\in C^3([0,T],\cR^{n+1})$  associated to $u$, and the trajectory $z\in C^1([0,T],\cR)$  satisfy the control system 
\begin{equation} \label{eq:CS}
\left\{
\begin{array}{l}
\dot{x}(t)=v(t)

\\
\dot{v}(t)=u(t)

\\
\dot{z}(t)=L\Bigl(t,x(t),v(t),u(t)-\langle u(t), x(t)\rangle x(t),z(t)\Bigr),
\end{array}
\right.
\end{equation}
subject to the boundary conditions (\ref{init_condition})--(\ref{bound_condition})
and to the constraint (\ref{eq:constraint}).
\end{quote}




In this case, the total Hamiltonian is defined by
\begin{align*} 
H(t,x,v,u,z,p_x,p_v,p_z,\lambda)&=\langle p_x, v\rangle +\langle p_v, u \rangle+p_z L(x,v,u-\langle u, x\rangle x, z)\\
&\quad +\lambda\left(\langle x, x\rangle-1\right),
\end{align*}
where $x,\,v,\,u,\,p_x,\,p_v\in\cR^{n+1}$ and $z,\,p_z,\,\lambda\in\cR$.


\begin{theorem} \label{thmOCP}
If $u$ is a solution of problem {\rm ($\overline{\P}_C$)} and $(x,v,z)$ is the associated optimal state trajectory, then there exist  a costate trajectory \linebreak $(p_x,p_v,p_z)\in C^1([0,T], \cR^{2n+1}\times\cR)$ and $\lambda\in C^1([0,T],\cR)$ such that the following  conditions hold:
\begin{itemize}
\item the Hamiltonian equations 
\begin{equation} \label{eeq2-2}
\left\{ \begin{array}{ll}
\dot{x}=v\\
\dot{v}=u \\
\dot{z}=L\bigl(t,x,v,u-\langle u, x\rangle x,z\bigr) \\
\dot{p}_x=-p_z \dfrac{\partial L}{\partial x} +p_z\Bigl\langle x, \dfrac{\partial L}{\partial \omega}\Bigr\rangle u +p_z\langle u,x\rangle \dfrac{\partial L}{\partial \omega} -2\lambda x\\
\dot{p}_v= -p_x-p_z\dfrac{\partial L}{\partial v} \\
\dot{p}_z=-p_z \dfrac{\partial L}{\partial z},
\end{array}
\right.
\end{equation}
\item the optimality condition
\begin{equation} \label{eq:Optimality}
 p_v + p_z \Bigl(\dfrac{\partial L}{\partial \omega} - \Bigl\langle \dfrac{\partial L}{\partial \omega},x\Bigr\rangle x\Bigr)=0, \\
\end{equation}
\item the transversality condition 
\begin{equation*}
p_z(T)=-1,
\end{equation*}
\end{itemize}
where $\omega=u-\langle u,x\rangle x$.

\end{theorem}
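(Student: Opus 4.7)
The plan is to apply Pontryagin's Maximum Principle (PMP) to the finite-dimensional optimal control problem $\overline{\P}_C$ in $\cR^{n+1}$, with the holonomic constraint $\langle x,x\rangle = 1$ adjoined via the pointwise Lagrange multiplier $\lambda(t)$, exactly as the multiplier $\mu$ was used in the variational approach of Theorem~\ref{T:ELsecondOrder}. Since $\overline{\P}_C$ is of Mayer type (the cost $z(T)$ is the terminal value of the state component $z$), with fixed endpoints on $x$ and $v$ and free $z(T)$, PMP supplies costate trajectories $p_x, p_v, p_z$ satisfying the canonical equations together with a transversality condition only on $p_z(T)$.

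The state equations in (\ref{eeq2-2}) coincide with (\ref{eq:CS}) and are just $\dot{x}=\partial H/\partial p_x$, $\dot{v}=\partial H/\partial p_v$, $\dot{z}=\partial H/\partial p_z$. The adjoint equations $\dot{p}_\bullet = -\partial H/\partial \bullet$ are then obtained one by one. The derivatives with respect to $v$ and $z$ are straightforward: $\partial H/\partial v = p_x + p_z\,\partial L/\partial v$ and $\partial H/\partial z = p_z\,\partial L/\partial z$, immediately giving the last two adjoint equations in (\ref{eeq2-2}).

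The main technical point is the derivative $\partial H/\partial x$, because $x$ enters $H$ both directly as a slot of $L$, indirectly through the projected argument $\omega = u - \langle u,x\rangle x$, and once more through the constraint term $\lambda(\langle x,x\rangle - 1)$. Taking a variation $\delta x$, one has $\delta\omega = -\langle u,\delta x\rangle x - \langle u,x\rangle \delta x$, and the chain rule yields
\begin{equation*}
\frac{\partial H}{\partial x} = p_z\,\frac{\partial L}{\partial x} - p_z\Bigl\langle \frac{\partial L}{\partial \omega}, x\Bigr\rangle u - p_z\langle u,x\rangle \frac{\partial L}{\partial \omega} + 2\lambda x,
\end{equation*}
whose negative is precisely the fourth equation in (\ref{eeq2-2}). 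For the optimality condition, one varies $u$: since $\delta\omega = \delta u - \langle \delta u, x\rangle x$, the chain rule produces $\partial H/\partial u = p_v + p_z\bigl(\partial L/\partial \omega - \langle \partial L/\partial \omega, x\rangle x\bigr)$, and PMP forces this expression to vanish, which is exactly (\ref{eq:Optimality}).

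Finally, because the terminal cost in Mayer form is $\phi = z(T)$ with both $x(T)$ and $v(T)$ prescribed, the only nontrivial transversality condition of PMP is $p_z(T) = -\partial\phi/\partial z(T) = -1$, the minus sign coming from the usual sign convention of PMP in the minimization setting. I expect no conceptual difficulty beyond the careful chain-rule bookkeeping needed to differentiate through the projector $\omega = u - \langle u,x\rangle x$; the admissibility of the multiplier $\lambda$ for the holonomic constraint is standard and mirrors the Lagrangian treatment of Section~\ref{2stOrderVP}.
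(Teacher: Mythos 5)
Your proposal is correct and follows essentially the same route as the paper: the authors' proof is precisely the first-variation derivation of the Pontryagin conditions for this Mayer problem, obtained by adjoining the control system and the holonomic constraint to $z(T)$ with multipliers $p_x,p_v,p_z,\lambda$ and integrating by parts, which yields $\dot p_\bullet=-\partial H/\partial\bullet$, $\partial H/\partial u=0$ and $p_z(T)=-1$ exactly as you state. Your explicit chain-rule computation of $\partial H/\partial x$ and $\partial H/\partial u$ through the projector $\omega=u-\langle u,x\rangle x$ is the step the paper leaves implicit, and it matches the stated equations.
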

\begin{proof}
By using the dynamical constraints, the costate trajectories $p_x$, $p_v$, $p_z$ and $\lambda$, define the following functional: 
\begin{flalign*}
&\quad ~ J(u)\\
&=z(T) +\displaystyle\int^T_0\!\!\Bigl[\langle p_x, \dot{x}-v \rangle+\langle p_v, \dot{v}-u \rangle -\lambda[\langle x,x\rangle-1]\Bigr] dt \\
 & \quad +\displaystyle\int^T_0 p_z\bigl[ \dot{z}-L\bigl(t,x,v,u-\langle u, x\rangle x,z\bigr) \bigr] dt\\
&=z(T)+\displaystyle \int^T_0\Bigl(\langle p_x, \dot{x}\rangle+\langle p_v, \dot{v}\rangle+p_z \dot{z}\Bigr)\,dt\\
& \quad -\displaystyle\int^T_0\Bigl[\langle p_x,v\rangle+\langle p_v,u\rangle+p_z L\bigl(t,x,v,u-\langle u, x\rangle x,z\bigr)+\lambda(\langle x,x\rangle-1)\Bigr]\,dt\\
& =z(T)+\displaystyle\int^T_0\Bigl(\langle p_x, \dot{x}\rangle+\langle p_v, \dot{v}\rangle+p_z \dot{z}\Bigr)\,dt-\int^T_0H(t,x,v,u,z,p_x,p_v,p_z,\lambda)\,dt.
\end{flalign*}

Let us consider $u+\varepsilon \delta u$ an admissible variation of $u$ and the related variations of $x$, $v$, $z$ and $\lambda$:
$$
x+\varepsilon \delta x, \quad v+\varepsilon \delta v, \quad z+\varepsilon \delta z \quad \mbox{and} \quad \lambda+\varepsilon \delta \lambda,
$$
where  $\varepsilon$ is a real parameter and $\delta x \in C^4([0,T],\cR^{n+1})$, $\delta v \in C^3([0,T],\cR^{n+1})$, $\delta u \in C^2([0,T],\cR^{n+1})$ and $\delta z,\;\delta \lambda \in C^1([0,T],\cR)$ are such that \linebreak $\delta x(0)= \delta x(T) =\delta v(0)= \delta v(T) =\delta z(0)=0$. 
Note that, a necessary condition for $u$ to be an optimal control is that $\left.\frac{d}{d\varepsilon}J(u+\varepsilon\delta u)\right|_{\varepsilon=0}=0$, for all variations $\delta u$.

The first variation of $J$ is given by
\begin{flalign*}
&\quad \left.\frac{d}{d\varepsilon}J(u+\varepsilon\delta u)\right|_{\varepsilon=0} \\
&= \delta z(T)+ \int^T_0\Bigl(\Bigl\langle p_x,\dfrac{d(\delta x)}{dt}\Bigr\rangle+\Bigl\langle p_v,\dfrac{d(\delta v)}{dt}\Bigr\rangle+p_z \dfrac{d(\delta z)}{dt}\Bigr)\,dt
\\
& \quad-\int^T_0\Bigl(\Bigl\langle \frac{\partial H}{\partial x},\delta x\Bigr\rangle+\Bigl\langle \frac{\partial H}{\partial v},\delta v\Bigr\rangle+\Bigl\langle\frac{\partial H}{\partial u},\delta u\Bigr\rangle+\frac{\partial H}{\partial z}\delta z+\frac{\partial H}{\partial \lambda}\delta \lambda\Bigr)\,dt.
\end{flalign*}
Integrating now by parts, one gets
\begin{flalign*}
&\quad \left.\frac{d}{d\varepsilon}J(u+\varepsilon\delta u)\right|_{\varepsilon=0}\\
&=-\int^T_0\Bigl(\langle \dot{p}_x,\delta x\rangle+\langle \dot{p}_v,\delta v\rangle+\dot{p}_z \delta z\Bigr)\,dt +p_z(T)\delta z(T)+\delta z(T)\\
& \quad \displaystyle -\int^T_0\!\Bigl(\Bigl\langle \frac{\partial H}{\partial x},\delta x\Bigr\rangle\!+\!\Bigl\langle \frac{\partial H}{\partial v},\delta v\Bigr\rangle\!+\!\Bigl\langle\frac{\partial H}{\partial u},\delta u\Bigr\rangle\!+\!\frac{\partial H}{\partial z}\delta z\!+\!\frac{\partial H}{\partial \lambda}\delta \lambda\Bigr)\,dt \\
&  \displaystyle =-\int^T_0\Bigl[\Bigl\langle \frac{\partial H}{\partial x}+\dot{p}_x,\delta x\Bigr\rangle+\Bigl\langle \frac{\partial H}{\partial v}+\dot{p}_v,\delta v\Bigr\rangle+\Bigl(\frac{\partial H}{\partial z}+\dot{p}_z\Bigr)\delta z\Bigr]\,dt \\
&\displaystyle
\quad -\int^T_0\Bigl\langle\frac{\partial H}{\partial u},\delta u
+\frac{\partial H}{\partial \lambda}\delta \lambda\Bigr\rangle\,dt+\bigl[p_z(T)+1\bigr]\delta z(T).
\end{flalign*}

The first three equations of the Hamiltonian system (\ref{eeq2-2}) are the equations of the control system (\ref{eq:CS}). Choosing the costates $p_x$, $p_y$ and $p_z$ such that 
\begin{equation*} 
\left\{ 
\begin{array}{ll}
\dot{p}_x=-\dfrac{\partial H}{\partial x}\\
\dot{p}_v=-\dfrac{\partial H}{\partial v}\\
\dot{p}_z=-\dfrac{\partial H}{\partial z}\\
\end{array}
\right.
\end{equation*}
and $p_z(T)=-1$ one gets the last three equations of (\ref{eeq2-2}). Finally, the optimality condition (\ref{eq:Optimality}) follows from the arbitrariness of $\delta u$ and  the constraint condition $\frac{\partial H}{\partial \lambda}=0$. 
\end{proof}

\begin{remark}{\rm 
Notice that the Euler--Lagrange equation {\rm (\ref{E-L(2)})} can be obtained from the optimality necessary conditions given in Theorem {\rm \ref{thmOCP}}. In fact, differentiating {\rm (\ref{eq:Optimality})} with respect to $t$, one gets
\begin{align}\label{difeq2}
 \dot{p}_v + \dot{p}_z  \Bigl( \dfrac{\partial L}{\partial \omega}  - \Bigl\langle \dfrac{\partial L}{\partial \omega},x\Bigr\rangle x\Bigr) +p_z\dfrac{d}{dt}\Bigl(\dfrac{\partial L}{\partial \omega}-\Bigl\langle \dfrac{\partial L}{\partial \omega},x\Bigr\rangle x \Bigr)=0.
\end{align}
Plugging {\rm(\ref{eeq2-2})} into {\rm(\ref{difeq2})}, one obtains
\begin{align}\label{difeq3}
 p_x+p_z\left[\dfrac{\partial L}{\partial v} +\dfrac{\partial L}{\partial z} \Bigl(\dfrac{\partial L}{\partial \omega}  -\Bigl\langle \dfrac{\partial L}{\partial \omega},x\Bigr\rangle x\Bigr) -\dfrac{d}{dt}\Bigl(\dfrac{\partial L}{\partial \omega}-\Bigl\langle \dfrac{\partial L}{\partial \omega},x\Bigr\rangle x \Bigr)\right]=0,
\end{align}
and by differentiating equation {\rm(\ref{difeq3})} with respect to $t$, it yields 
\begin{align*}
& \dot{p}_x+\dot{p}_z\left[\dfrac{\partial L}{\partial v} +\dfrac{\partial L}{\partial z} \Bigl(\dfrac{\partial L}{\partial \omega}  -\Bigl\langle \dfrac{\partial L}{\partial \omega},x\Bigr\rangle x\Bigr) -\dfrac{d}{dt}\Bigl(\dfrac{\partial L}{\partial \omega}-\Bigl\langle \dfrac{\partial L}{\partial \omega},x\Bigr\rangle x \Bigr)\right]
\\
&+p_z \left[\dfrac{d}{dt}\dfrac{\partial L}{\partial v} + \dfrac{d}{dt}\Bigl(\dfrac{\partial L}{\partial z} \dfrac{\partial L}{\partial \omega}-\dfrac{\partial L}{\partial z}  \Bigl\langle \dfrac{\partial L}{\partial \omega},x\Bigr\rangle x  \Bigr)- \dfrac{d^2}{dt^2}\Bigl(\dfrac{\partial L}{\partial \omega} - \Bigl\langle \dfrac{\partial L}{\partial \omega},x\Bigl\rangle x \Bigr) \right]=0.
\end{align*}
Using again  {\rm (\ref{eeq2-2})}, one has 
\begin{align*}
\begin{split} 
& p_z \left[ \dfrac{\partial L}{\partial x}-\dfrac{d}{dt}\dfrac{\partial L}{\partial v} -\Bigl\langle x, \dfrac{\partial L}{\partial \omega}\Bigr\rangle u -\langle u,x\rangle \dfrac{\partial L}{\partial \omega} + \Bigl( \dfrac{\partial L}{\partial z}\Bigr)^2\Bigl( \dfrac{\partial L}{\partial \omega} -  \Bigl\langle \dfrac{\partial L}{\partial \omega},x\Bigr\rangle x\Bigr)\right.\\
&\left.+\dfrac{\partial L}{\partial z}\dfrac{\partial L}{\partial v}-\dfrac{\partial L}{\partial z}\dfrac{d}{dt}\dfrac{\partial L}{\partial \omega} +\dfrac{\partial L}{\partial z}  \dfrac{d}{dt}\Bigl(\Bigl\langle \dfrac{\partial L}{\partial \omega},x\Bigr\rangle x \Bigr) - \dfrac{d}{dt}\Bigl(\dfrac{\partial L}{\partial z} \dfrac{\partial L}{\partial \omega}-\dfrac{\partial L}{\partial z}  \Bigl\langle \dfrac{\partial L}{\partial \omega},x\Bigr\rangle x  \Bigr)\right.\\
& \left.+ \dfrac{d^2}{dt^2}\Bigl(\dfrac{\partial L}{\partial \omega} - \Bigl\langle \dfrac{\partial L}{\partial \omega},x\Bigl\rangle x \Bigr) \right]=-2\lambda x.
\end{split}
\end{align*}
Taking now the inner product of the above equation with $x$ obtain $\lambda$ and plug it on the above. Finally, using the fact that $v=\dot{x}$ and $u=\ddot{x}$, the Euler--Lagrange equation {\rm (\ref{E-L(2)})} comes after some straightforward computations. }
\end{remark}

\begin{remark} {\rm
Notice that an optimal control problem with pure state constraints can be addressed by many different ways. For more details on this subject we refer, for instance, {\rm\cite{Trelat2003,JacobsonLele1971}}. 
Notice also that the regularity conditions considered in Theorem {\rm\ref{thmOCP}} can be relaxed by doing some convenient adaptations.  For a matter of coherence, in the formulation of the optimal control problem, we kept the regularity conditions considered in the variational approach given in Section {\rm\ref{2stOrderVP}}. }
\end{remark}

\section{Ilustrative Examples}

In this section we present two classical examples that can be obtained directly from our main results. Namely, the cubic polynomials on $S^n$ and the dynamics of the simple pendulum moving in a resistive medium.  
\subsection{{\textbf{Cubic Polynomials on $S^n$}}} 

\label{RCP}
Cubic polynomials on Riemannian manifolds were defined in Noakes {\it et al.} \cite{NoaHeiPad:1989} as the solutions of the Euler--Lagrange equation corresponding to the variational problem:
$$
\dis\min_{x} \; ~\frac{1}{2}\int_0^T \Bigl\langle \frac{D^2 x}{d t^2}, \frac{D^2 x}{d t^2} \Bigr\rangle \;dt,
$$
where $x$ satisfies the boundary conditions $x(0)=x_0$, $x(T)=x_T$, $\dot{x}(0)=v_0$ and $\dot{x}(T)=v_T$, for some $x_0, \,x_T\in S^n$, $v_0\in T_{x_0}S^n$ and $v_{T}\in T_{x_T}S^n$. 
Therefore, the problem of finding cubics on $S^n$ can be seen  as a particular case of the Herglotz problem by considering the Lagrangian $L$ defined by
$$
L(t,x,\dot{x},\omega)=\frac{1}{2}\bigl\langle \omega,\omega\bigr\rangle.
$$ 
Notice that, for this particular Lagrangian, $
\frac{\partial L}{\partial x}=\frac{\partial L}{\partial  \dot{x}}=\frac{\partial L}{\partial  z}=0$ and $\frac{\partial L}{\partial \omega}=\omega.$
Moreover,
\begin{flalign*}
\dfrac{D^2}{dt^2}\Bigl(\dfrac{\partial L}{\partial \omega}\Bigr) &= \dfrac{D}{dt} \Bigl( \dddot{x} - \langle \dddot{x},x\rangle x-\langle \ddot{x},x\rangle\dot{x} \Bigr)\\
&= \ddddot{x} - \langle \ddddot{x},x\rangle x- 2\langle\dddot{x},x \rangle \dot{x}-\langle \ddot{x},\dot{x}\rangle\dot{x} -\langle \ddot{x},x\rangle \ddot{x}+\langle \ddot{x},x\rangle^2 x.
\end{flalign*}
Notice also that
$
\bigl\langle \frac{\partial L}{\partial \omega}, 
x \bigr\rangle =0
$
and
$\bigl\langle \frac{d}{dt}\bigl(\frac{\partial L}{\partial \omega}\bigr),x\bigr\rangle \dot{x}=-\langle \ddot{x},\dot{x}\rangle \dot{x}$. Therefore, in this particular case, the Euler--Lagrange equation (\ref{E-L-no-z}) reduces  to
\begin{flalign*}
&\ddddot{x} - \langle \ddddot{x},x\rangle x- 2\langle\dddot{x},x \rangle \dot{x}-2\langle \ddot{x},\dot{x}\rangle\dot{x} -\langle \ddot{x},x\rangle \ddot{x} - \langle \ddot{x},x\rangle \ddot{x}+2\langle\ddot{x},x\rangle^2 x=0.
\end{flalign*}
Taking now into account that $\langle x,x\rangle=1$, we can derive the following identities 
\begin{flalign*}
&\langle \dot{x},x\rangle=0,\\
&\langle \ddot{x},x\rangle=-\langle \dot{x},\dot{x}\rangle,\\
&\langle \dddot{x},x\rangle=-3\langle \ddot{x},\dot{x}\rangle,\\
&\langle \ddddot{x},x\rangle=-4\langle \dddot{x},\dot{x}\rangle-3\langle \ddot{x},\ddot{x}\rangle.
\end{flalign*}
Making use of the above, the Euler--Lagrange equation becomes
\begin{flalign*}
\ddddot{x}+4\langle \dddot{x},\dot{x}\rangle x+3\langle \ddot{x},\ddot{x}\rangle x+4\langle \ddot{x},\dot{x}\rangle \dot{x} -2 \langle \ddot{x},x\rangle \ddot{x}+2 \langle \ddot{x},x\rangle^2x=0,
\end{flalign*}
which is exactly the equation that characterizes cubic polynomials on $S^n$.

In \cite{2013AbrCamClem}, it can be found a geometric Hamiltonian formulation of the Riemannian cubic polynomials problem and some examples, including the $2$--sphere $S^2$. This paper has also interesting references in the context of cubic polynomials.


\subsection{{\textbf{Simple Pendulum}}} \label{SP}

The simple pendulum is formed of a light, stiff, inextensible rod of length $l$ with a bob of mass $m$ that is moving in the $2$--dimensional Euclidean space.

In what follows, we assume that the gravitational field is uniform, being $g$ the acceleration of gravity and consider, without loss of generality, that $l=1$.

The trajectory of the simple pendulum is therefore a curve lying in $S^1$ that we denote by $t\mapsto x(t)=\bigl(x_1(t),x_3(t)\bigr)\in S^1$. For convenience of notation, we sometimes drop the index $t$ and write simply $x$.

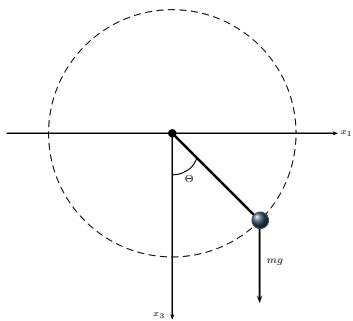
\begin{figure}[h]
\psscalebox{0.55}{
\begin{pspicture}(-10.4,-5.5)(15.7,3.4)
\psset{linewidth=0.5pt}
\psaxes[labels=none,ticks=none]{->}(0,0)(-4.0,0)(4.0,-4.5)
\uput[r](3.9,0){\tiny $x_1$}
\psline[linewidth=0.01,linestyle=solid](0,-0.05)(0,0.05)
\uput[l](0,-4.4){\tiny $x_3$}
\pscircle[linestyle=dashed](0,0){3}
\pscircle*(0,0){0.1}
\psplot[linecolor=black,linewidth=1.75pt]{0}{2.11}{0 1 x mul sub}
\psset{linewidth=0.75pt}
 \pscurve(0,-1)(0.15,-0.99)(0.45,-0.83)(0.6,-0.6)
\uput[l](0.7,-1.1){\scriptsize $\Theta$}
\uput[r](2.11,-3.11){\scriptsize $mg$}
\psset{linewidth=1.25pt}
 \psline{->}(2.11,-2.11)(2.11,-4.11)
\put(1.92,-2.31){\begin{tikzpicture}\draw[draw=fg!50!dblue!90!bg, inner sep=1pt, shading=ball,circle, ball color=dblue](1.5,-1.5)  circle (0.2cm);\end{tikzpicture}} 
\end{pspicture}
}

\caption{Illustration of a simple pendulum whose trajectory is contained in $S^1$.} \label{fig1}
\end{figure}

For such a motion, the kinetic energy is given by
$$
T=\dfrac{1}{2}m\langle \dot{x},\dot{x}\rangle,
$$
which depends only on $\dot{x}$ and the potential $U$, depending only on the position $x$ of the bob, is given by
$$
U=-mg\langle x,e_3\rangle,
$$
where $e_3$ stands for the unit vector $(0,1)$. To highlight these dependences we write $T(\dot{x})$ and $U(x)$. The Lagrangian function is, in this case, defined by
$$
L(x,\dot{x})=T(\dot{x})-U(x)=\dfrac{1}{2}m\langle \dot{x},\dot{x}\rangle + mg\langle x,e_3\rangle.
$$
Since, $\frac{\partial L}{\partial \dot{x}}=m\dot{x}$ and $\frac{\partial L}{\partial x} =mg\,e_3$, the Euler--Lagrange equation for the motion of the simple pendulum is, according to Corollary \ref{cor1}, given by
$$
\ddot{x}+\langle \dot{x},\dot{x} \rangle\, x -g\, e_3 +g \,\bigl\langle e_3,x\bigr\rangle\, x =0.
$$

If, in addition, the pendulum is moving in a resistive medium with a resistive force that is proportional to the velocity $\dot{x}$,  the Lagrangian function is defined by
$$
L(x,\dot{x},z)=T(\dot{x})-U(x)-\alpha z=\dfrac{1}{2}m\langle \dot{x},\dot{x}\rangle -U(x)-\alpha z,
$$
where $\alpha$ denotes a positive real number. From Corollary \ref{cor1}, the Euler--Lagrange equation is
$$
\dfrac{D}{dt}\dfrac{\partial L}{\partial \dot{x}} - \dfrac{\partial L}{\partial x} +\Bigl\langle \dfrac{\partial L}{\partial x} ,x \Bigr\rangle x- \dfrac{\partial L}{\partial z}\Bigl(\dfrac{\partial L}{\partial \dot{x}}-\Bigl\langle \dfrac{\partial L}{\partial \dot{x}} ,x \Bigr\rangle x\Bigr)=0,
$$  
which, in this case, reduces to
\begin{equation}\label{pendulum_eq}
\ddot{x}+\langle \dot{x},\dot{x} \rangle x -g\,e_3 + g\, \bigl\langle e_3,x\bigr\rangle\, x + \alpha \dot{x}=0,
\end{equation}
where equality $\langle \ddot{x},x \rangle=-\langle \dot{x},\dot{x} \rangle$ and the fact that $\langle \dot{x},x\rangle=0$ have been used.
To prove the equivalence between (\ref{pendulum_eq}) and the classical equation for the simple pendulum moving in a resistive medium, let us write $x_1$ and $x_3$ using the angular displacement $\Theta$ represented in Figure \ref{fig1}:
$$
\left\{\begin{array}{ll}
x_1=\sin \Theta\\
x_3=\cos \Theta.
\end{array}\right.
$$
Since $\langle \dot{x},\dot{x}\rangle=\dot{\Theta}^2$ and 
$$
\left\{\begin{array}{ll}
\ddot{x}_1=\ddot{\Theta}\cos \Theta - \dot{\Theta}^2\sin\Theta \\
\ddot{x}_3=-\ddot{\Theta}\sin \Theta - \dot{\Theta}^2\cos\Theta,\end{array}\right.
$$
equation (\ref{pendulum_eq}) is equivalent to
\begin{equation}\label{syst}
\left\{\begin{array}{ll}
\ddot{\Theta}\cos \Theta - \dot{\Theta}^2\sin\Theta + \dot{\Theta}^2\sin\Theta + g \sin \Theta \cos \Theta +\alpha \dot{\Theta}\cos\Theta=0\\
-\ddot{\Theta}\sin \Theta -\dot{\Theta}^2\cos\Theta + \dot{\Theta}^2\cos\Theta -g+g\cos^2\Theta -\alpha \dot{\Theta}\sin\Theta=0. 
\end{array}\right.
\end{equation}
Multiplying the first equation of (\ref{syst}) by $\cos\Theta$ and the second equation by $\sin\Theta$ and afterwards subtracting those equations one gets 
\begin{equation}\label{classical}
\ddot{\Theta}+g\sin \Theta +\alpha \dot{\Theta}=0.
\end{equation}
If one considers $\alpha =\dfrac{K}{m}$, equation (\ref{classical}) describes the motion of a simple pendulum moving in a resistive medium having $K$ as the coefficient resistive (see, for instance, \cite{Mickens,Nelson}).

\section{Conclusions}\label{Concluding Remarks}
The second--order Herglotz problem has been extended to the Euclidean sphere $S^n$ and first--order optimality conditions have been derived following variational and optimal control approaches. The problem differs from the corresponding higher--order problem on Euclidean spaces not only by the introduction of a holonomic constraint, but also by the use of covariant derivatives. Consequently, the nonlinearity of covariant derivatives difficults the computations and the generalized Euler--Lagrange equation turned out to be much more complex than its counterpart on the Euclidean spaces.

An important feature that deserved our attention was the fact that the Euler--Lagrange equation for the classical calculus of variations problem on $S^n$ could be obtained as a particular case of the Herglotz variational problem. Therefore, many of the problems that appear in the literature requiring classical approaches can be seen as particular cases of the higher--order Herglotz problem presented in this paper. Such is the case of the Riemannian cubic polynomials on $S^n$ illustrated in subsection 5.1, or the elastic curves treated, for instance, in \cite{Cam:00}. The Herglotz problem on $S^n$, when the Lagrangian $L$ depends explicitly on the variable $z$, finds also applicability on the equation of motion of a simple pendulum in a resistive medium as shown in subsection 5.2.

One of our future aims is the study of Herglotz problems for other Riemannian manifolds, like Lie groups or symmetric spaces, that play important roles in applications arising in physics and engineering.

Other challenging question is to generalize the results presented in this paper to fractional Herglotz problems with delayed arguments using the rich framework of fractional optimal control theory \cite{Jarad:10,Jarad:12,Bahaa}.


\section*{Acknowledgements}
The work of L\'{i}gia Abrunheiro and Nat\'{a}lia Martins was supported by Portuguese funds through the \emph{Center for Research and Development in Mathematics and Applications} (CIDMA)  and the \emph{Portuguese Foundation for Science and Technology} (``FCT--Funda\c{c}\~ao para a Ci\^encia e a Tecnologia''), within project UID/MAT/04106/2013.
Lu\'{i}s Machado acknowledges ``Funda\c{c}\~ao para a Ci\^encia e a Tecnologia" (FCT--Portugal) and COMPETE 2020 Program for financial support through project UID-EEA-00048-2013.
 
The authors would like to thank the reviewers for their valuable suggestions to improve the quality of the paper.


{}


\begin{thebibliography}{}

\bibitem{VuJo} 
     Vujanovic, B. D., Jones, S. E.:
     \newblock {Variational Methods in Nonconservative Phenomena}.
     \newblock Mathematics in Science and Engineering, vol. 182, Academic Press, Inc., (1989)

\bibitem{Herglotz1930}
    \newblock Herglotz, G.:
    \newblock  {Ber\"uhrungstransformationen},
    \newblock  Lectures at the University of G\"ottingen,
G\"ottingen, (1930)

\bibitem{Georgieva2002}
     \newblock  Georgieva B., Guenther, R.:
     \newblock First Noether--type theorem for the generalized variational principle of Herglotz.
     \newblock {Topol. Methods Nonlinear Anal.} \textbf{20}(2), pp. 261-273 (2002)

\bibitem{MR1391230}
     \newblock  Guenther, R.,  Gottsch,  J. A.,  Kramer,  D. B.: 
     \newblock The Herglotz algorithm for constructing canonical transformations.
     \newblock {SIAM Rev.} \textbf{38}(2) pp. 287-293 (1996)

\bibitem{Guenther1996}
     \newblock  Guenther, R.,  Guenther, C. M.,  Gottsch, J. A.:
     \newblock The Herglotz Lectures on Contact Transformations and Hamiltonian Systems.
     \newblock {Lecture Notes in Nonlinear Analysis} 1, Juliusz Schauder Center for Nonlinear Studies, Nicholas Copernicus University, Tor\'{u}n (1996)


\bibitem{Georgieva2010}
     \newblock  Georgieva, B.: 
          \newblock Symmetries of the Herglotz variational principle in the case of one independent variable.
     \newblock {Ann. Sofia Univ., Fac. Math and Inf.} \textbf{100}, pp. 113-122 (2010)

\bibitem{Georgieva2005}
     \newblock  Georgieva B.,  Guenther, R.: 
     \newblock Second Noether-type theorem for the generalized variational principle of Herglotz.
     \newblock {Topol. Methods Nonlinear Anal.} \textbf{26}(2), pp. 307-314 (2005)

\bibitem{Georgieva2003}
     \newblock  Georgieva, B., Guenther R.,  Bodurov, T.: 
          \newblock Generalized variational principle of Herglotz for several independent variables.
First Noether--type theorem.
     \newblock {J. Math. Phys.} \textbf{44}(9), pp. 3911-3927 (2003)

\bibitem{Simao+Delfim+Natalia2015b}
     \newblock  Santos, S. P. S., Martins, N., Torres, D. F. M.: 
     \newblock Variational problems of Herglotz type with time delay: Dubois-Reymond condition and Noether's first theorem.
     \newblock {Discrete Contin. Dyn. Syst. Ser. A} \textbf{35}(9), pp. 4593-4610 (2015)

\bibitem{Simao+Delfim+Natalia2015a}
     \newblock  Santos, S. P. S., Martins, N., Torres, D. F. M.:  
     \newblock An optimal control approach to Herglotz variational problems.
     \newblock {Optimization in the Natural Sciences, Comm. Com. Inf. Sc. (CCIS)} \textbf{449}, pp. 107-117 (2015)





\bibitem{Simao+Delfim+Natalia2015c}
     \newblock  Santos, S. P. S., Martins, N., Torres, D. F. M.:  
     \newblock Noether's theorem for higher--order variational problems of Herglotz type.
     \newblock {10th AIMS Conference on Dynamical Systems, Differential Equations and Applications}, , AIMS Proceedings \textbf{2015}, pp. 990--999 (2015)


\bibitem{Simao+Delfim+Natalia2016}
     \newblock  Santos, S. P. S., Martins, N., Torres, D. F. M.: 
     \newblock Higher--order variational problems of Herglotz type with time delay.
     \newblock {Pure and Applied Functional Analysis} \textbf{1}(2), pp. 291-307 (2016)


\bibitem{Simao+Delfim+Natalia2018}
 \newblock  Santos, S. P. S., Martins, N., Torres, D. F. M.:  
\newblock Noether Currents for higher--order variational problems of Herglotz type with time delay.
 \newblock {Discrete Contin. Dyn. Syst. Ser. S} \textbf{11}(1), pp. 91-102  (2018)

 \bibitem{Zhang} Zhang, Y.: Variational problem of Herglotz type for Birkhoffian system
and its Noether's theorems. Acta Mech. \textbf{228}(4), pp. 1481-1492 (2017)

 
 \bibitem{Tian} Tian, X.  Zhang, Y.: Noether's Theorem and its Inverse of Birkhoffian System
in Event Space Based on Herglotz Variational Problem. Int J Theor Phys \textbf{57}(3), pp. 887-897
(2018) 

 \bibitem{Birk} Birkhoff, G. D.: Dynamical systems. AMS College Publication, Providence (1927)
 
 \bibitem{San} Santilli, R. M.: Foundations of theoretical mechanics II. Springer-Verlag (1983)
 


   
   
    \bibitem{Alm:2014}
     \newblock  Almeida R., Malinowska, A. B.: 
     \newblock Fractional variational principle of Herglotz.
     \newblock {Discrete Contin. Dyn. Syst. Ser. B} \textbf{19}(8), 2367-2381 (2014)   
  
 
 \bibitem{Riewe1} Riewe, F.: Nonconservative Lagrangian and Hamiltonian mechanics. Phys. Rev. E \textbf{53}, pp. 1890-1899 (1996)

 \bibitem{Riewe2}  Riewe, F.: Mechanics with fractional derivatives. Phys. Rev. E \textbf{55}, pp. 3582-3592  (1997)

\bibitem{Agrawal1} Agrawal, O. P.: A general formulation and solution scheme for fractional optimal control problems. Nonlinear Dyn. \textbf{38}, pp. 323-337 (2004)

 \bibitem{Agrawal2}  Agrawal, O. P., Baleanu, D.: Hamiltonian formulation and a direct numerical scheme for fractional optimal control problems. J. Vib. Control \textbf{13}(9-10), pp. 1269-1281 (2007)

   
\bibitem{Jarad:10} Jarad, F., Abdeljawad, T., Baleanu, D.: Fractional variational optimal control problems with
delayed arguments. Nonlinear Dyn. \textbf{62}, pp. 609-614 (2010)


\bibitem{Jarad:12} Jarad, F., Abdeljawad, T., Baleanu, D.: Higher--order fractional variational optimal control
problems with delayed arguments. Appl. Math. Comput. \textbf{218}, pp. 9234-9240 (2012)


\bibitem{Bahaa} Bahaa, G. M.: Fractional optimal control problem for diferential system with delay argument.
Advances in diference equations 69, pp. 1-19 (2017)

   \bibitem{2016AbrMacMar}
     \newblock  Abrunheiro, L., Machado, L., Martins, N.: 
     \newblock The Herglotz variational problem on spheres and its optimal control approach.
     \newblock {Journal of Mathematical Analysis} \textbf{7}(1), pp. 12-22 (2016)


 \bibitem{Arnold} Arnold, V.I. (2nd ed.): Mathematical Methods of Classical Mechanics. Graduate Texts in Mathematics \textbf{60}, Springer-Verlag (1989)
 

\bibitem{Cro:94} Bloch, A. M, Crouch, P. E.: Reduction of Euler-Lagrange problems for constrained variational problems and relation with optimal control problems. Proceedings of the 33rd Conference on Decision and Control, Orlando, FL, pp. 2584-2590 (1994)

\bibitem{Cro:95} Bloch, A. M., Crouch, P. E.: Nonholonomic control systems on Riemannian manifolds. SIAM J. Control Optim. \textbf{33}, pp. 126-148 (1995)

\bibitem{Cro:96} Bloch, A. M., Crouch, P. E.: On the equivalence of
higher--order variational problems and optimal control problems. Proceedings of the 35th Conference on Decision and Control, Kobe, Japan, pp. 1648--1653 (1996)



\bibitem{cmA} 
     \newblock do Carmo, M. P.: 
     \newblock {Riemannian Geometry}.
     \newblock Mathematics: Theory and Applications, Birk${\ddot{\rm a}}$user, Boston, MA (1992)

\bibitem{Jost} 
     \newblock Jost, J. (6$^{th}$ ed.):
     \newblock {Riemannian Geometry and Geometric Analysis}.
     \newblock Universitext, Springer (2011)


\bibitem{LeonRodrigues:1985}
     \newblock  de Le\'{o}n, M., Rodrigues, P. R.: 
     \newblock Generalized classical mechanics and field theory: a geometrical approach of Lagrangian and Hamiltonian formalisms involving higher--order derivatives.
     \newblock {North-Holland Math. Studies} \textbf{112}, L. Nachbin (ed.), Amsterdam: North-Holland, Elsiever (1985)

\bibitem{Simao+Delfim+Natalia2014}
     \newblock  Santos, S. P. S., Martins, N., Torres, D. F. M.:
     \newblock Higher--order variational problems of Herglotz type.
     \newblock {Vietnam J. Math.} \textbf{42}, pp. 409-419  (2014)


\bibitem{Trelat2003}
     \newblock  Bonnard, B.,  Faubourg,  L.,  Launay , G., Tr\'elat, E.:  
     \newblock Optimal control with state constraints and the space shuttle re-entry problem.
     \newblock {J. Dynam. Control Syst.} \textbf{9}(2), pp. 155-199 (2003)


\bibitem{JacobsonLele1971}
     \newblock Jacobson, D. H., Lee,  M. M.:  
     \newblock New necessary conditions of optimality for control problems with state--variable inequality constraints.
     \newblock {J. Math. Anal. Appl.} \textbf{35}, pp. 255-284 (1971)

\bibitem{NoaHeiPad:1989}
     \newblock  Noakes, L.,  Heinzinger, G.,  Paden, B.
     \newblock Cubic splines on curved spaces.
     \newblock {IMA J. Math. Control Inform.} \textbf{6}, pp. 465-473  (1989)


\bibitem{2013AbrCamClem}
     Abrunheiro, L. Camarinha, M., Clemente-Gallardo,  J.:
     Geometric Hamiltonian formulation of a variational problem depending on the covariant acceleration.
     \newblock {The Cape Verde International Days on Mathematics 2013, Praia, Cape Verde}, 
     Conference Papers in Mathematics, Hindawi Publishing Corporation, pp. 1-9 (2013)

\bibitem{Mickens} 
     \newblock Mickens, R. E.:
     \newblock {An Introduction to Nonlinear Oscillations}.
     \newblock Cambridge University Press (1981)

\bibitem{Nelson}
     \newblock  Nelson, R. A., Olsson, M. G.: 
     \newblock The pendulum -- Rich physics from a simple system.
     \newblock {Am. J. Phys.} \textbf{54}(2), pp. 112-121 (1986)


\bibitem{Cam:00}
     \newblock  Camarinha, M., Silva Leite, F., Crouch, P.: 
     \newblock Elastic curves as solutions of Riemannian and sub-Riemannian control problems.
     \newblock {Math. Control Signal Syst.} \textbf{13}(2), pp. 140-155 (2000)


\end{thebibliography}
\end{document}